\newtheorem{definition}{Definition}
\newtheorem{thm}{Theorem}
\newtheorem{cor}{Corollary}
\newtheorem{lemma}{Lemma}
\newtheorem{prop}{Proposition}
\newcommand{\bq}{\mathbf{Q}}
\newcommand{\bz}{\mathbf{Z}}
\newcommand{\bl}{\begin{lemma}}
\newcommand{\el}{\end{lemma}}
\newcommand{\bd}{\begin{definition}}
\newcommand{\ed}{\end{definition}}
\newcommand{\bcor}{\begin{cor}}
\newcommand{\ecor}{\end{cor}}
\newcommand{\gal}{\mathrm{Gal\,}}
\title{An Elementary Problem in Galois Theory 
about  the Roots of  Irreducible Polynomials}
\author{M Krithika and P Vanchinathan*}
\begin{document}
\maketitle
\begin{center}
	School of Advanced Sciences\\
	VIT  Chennai\\
	Vandalur-Kelambakkam Raod\\
	Chennai 6000127, INDIA\\
 \texttt{krithika.m2020@vitstudent.ac.in, vanchinathan.p@vit.ac.in}
\end{center}

\begin{abstract}
	For a  field  $K$, and a root $\alpha$ of an irreducible polynomial over $K$ (in some algebraic closure) the number of roots of $f(x)$ lying in $K(\alpha)$ is studied here. Given such an $f(x)$ of degree $n$ for which $r$ of the roots are in $K(\alpha)$, we describe a  construction that yields, for $d\ge2$, irreducible polynomial $g(x)$ of degree $nd$ 
and with  exactly $rd$ of the roots in the field generated by any one root of those polynomials. Our results are valid for fields of characteristic 0 
and possibly all infinite  perfect fields.
As an application, for any algebraic number field $K$ and   positive integers $n\ge3,d\ge2$,
 we provide  irreducible polynomials  of degree $nd$
with exactly  $d$ roots in the field generated by one of the roots.

Independently,	for $k<n$, we prove directly the existence
of irreducible polynomials over $\bq$  of  degree $n!/(n-k)!$
for which the field generated by one root contains exactly
	$k!$ roots.

Many interesting  new questions for further research are provided. 
\end{abstract}

\noindent\textbf{Keywords:} Roots of polynomials; root cluster; Galois theory

\vspace{3pt}
\noindent\textbf{AMS Classification:}  11R32, 12F05
\section{Introduction}

In Galois theory the splitting field of an irreducible polynomial $f(x)$, 
i.e., the field obtained by adjoining \textit{all the roots of} $f(x)$,
is the main object of study. The very definition of this
implicitly admits the reality that, with just a single  root, the field 
generated may be  much smaller than the splitting field. As an extreme 
case, given an   odd number $n$, and a prime number $p$, for the 
irreducible polynomial $x^n-p \in\bq[x]$, the field field generated 
by the unique real root (in fact, any other root) will contain no other roots.

So the following question arises naturally: for a general irreducible polynomial  
how many of the roots are found in  the field generated by a single  root?
Alexandar Perlis in his paper \cite{perlis} published in 2004   has given a clear interpretation for this number in terms of the subgroups of the Galois group. (See Theorem~\ref{perthm} in Section 2). In this context
we introduce the following  terminology (deviating from his own):

\bd
For an irreducible polynomial $f(x)$ over some perfect field $K$,
and  a root $\alpha$ of $f(x)$ in a fixed algebraic closure $\bar K$, 
by the  root cluster  of $\alpha$  we mean the set of roots of $f(x)$ in the field $K(\alpha)$. The cardinality of this set is called the root cluster size
of $\alpha$. 
\ed
For simplicity sometimes we say cluster size to mean root cluster size.

\vspace{3pt}

We have already seen the extreme case where the clusters are singletons.
The other extreme is obvious:

\vspace{3pt}
\noindent\textbf{Example 1}\quad
 For  any primitive element $\alpha$  of a Galois extension $L/K$,
all the roots of the minimal polynomial of $\alpha$ over $K$ form a 
single cluster. So the  root cluster size is maximal, namely the degree
of the irreducible polynomial. 

Next we give an example that is in the  middle ground.

\vspace{3pt}
\noindent\textbf{Example 2}\quad 
Take a quartic polynomial over $\bq$ with $D_4$ as Galois group, to be 
specific we may take
$x^4-tx^3-x^2+tx+1$ with  $t$ an integer${}\ge4$, 
where for any root $\alpha$, we have 
$-1/\alpha$ is also a root. The four roots of this irreducible
quartic polynomial form two clusters each of size 2.

These examples make it clear that determining the cluster sizes of
an irreducible polynomial and comparing it  to its  degree is  an interesting
and fundamental problem demanding deeper study.
We continue the work initiated by Perlis.

All the fields considered in this paper are finite extensions
of a  perfect base field $K$ contained in some fixed algebraic closure 
of it.  (Finite fields are uninteresting here as all their finite
extensions are Galois;  fields $\mathbf{R, C}$
 are also not interesting cases for
 this problem as they do not have many algebraic extensions.
 In fact one may conveniently assume  our base field to be
 an arbitrary number field).

 In this paper we prove three results on the concept of root clusters.
 The second is a generalization of the result of Perlis.

First result gives   a constructive procedure 
that starts from an \mbox{irreducible}  polynomial of some cluster size as input, 
and yields a higher degree irreducible polynomial  with proportionately 
higher cluster size as the output. More precisely:
\begin{thm}(\textbf{Cluster Magnification}): \label{algorithm}
Let $f(x)\in K[x] $ be the minimal polynomial of an algebraic  element 
$\alpha$ of degree $n>2$ over $K$ with cluster size $r$ and let 
$ K_f$ be its splitting field.
Assume that there is  a Galois extension of $K$, say of degree $d$,
which is linearly disjoint with $K_f$  over $K$.
Then  there exists an irreducible polynomial $g(x)\in K[x]$ of degree $nd$ 
with cluster size $rd$.
\end{thm}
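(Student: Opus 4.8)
The plan is to build $g(x)$ as the minimal polynomial of a primitive element of the compositum $K(\alpha)\cdot L$, where $L$ is the hypothesized degree-$d$ Galois extension linearly disjoint from $K_f$. Let me think carefully about why this should have degree $nd$ and cluster size exactly $rd$.

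Let me set up the situation. We have $\alpha$ of degree $n$ over $K$, with minimal polynomial $f$, and $K_f$ its splitting field. The cluster size is $r$, meaning exactly $r$ roots of $f$ lie in $K(\alpha)$. By Perlis's theorem (Theorem~\ref{perthm}), this $r$ should be interpreted group-theoretically: if $G = \gal(K_f/K)$ and $H = \gal(K_f/K(\alpha))$ is the stabilizer of $\alpha$, then the roots of $f$ are indexed by cosets $G/H$ (via $\sigma H \mapsto \sigma\alpha$), and $\alpha$'s cluster corresponds to those cosets $\sigma H$ with $\sigma\alpha \in K(\alpha)$, i.e., $\sigma H \subseteq$ ... the roots fixed inside $K(\alpha)$. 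Concretely, $r = [N_G(H):H]$ where $N_G(H)$ is the normalizer — no wait, let me reconsider. The roots in $K(\alpha)$ are those $\sigma\alpha$ with $K(\sigma\alpha) \subseteq K(\alpha)$. Since $[K(\alpha):K]=[K(\sigma\alpha):K]=n$, this is equivalent to $K(\sigma\alpha)=K(\alpha)$, i.e. $\sigma H \sigma^{-1} = H$, i.e. $\sigma \in N_G(H)$. So the cluster size is $r = [N_G(H):H]$.

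Now consider $L/K$ Galois of degree $d$, linearly disjoint from $K_f$. Let me think about the compositum. The splitting field of $g$ should be $M = K_f \cdot L$. Since $L$ and $K_f$ are linearly disjoint over $K$ (and $L/K$ is Galois), we have $\gal(M/K) \cong \gal(K_f/K) \times \gal(L/K) = G \times \Delta$ where $\Delta = \gal(L/K)$ has order $d$.

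Now I need to choose the right primitive element. The natural candidate: take $\beta$ a primitive element of $L$, and consider $\theta = \alpha + c\beta$ for suitable $c$, or more cleanly, consider $g$ = minimal polynomial of a primitive element of $K(\alpha)\cdot L$ over $K$. The field $K(\alpha)\cdot L$ has degree $n \cdot d$ over $K$ (since $K(\alpha)/K$ has degree $n$ and is contained in $K_f$, linearly disjoint from $L$). So $g$ has degree $nd$. Good. The subgroup of $G\times\Delta$ fixing $K(\alpha)\cdot L$ is $H \times \{1\}$ (where $H = \gal(K_f/K(\alpha))$), because to fix $L$ we need the second coordinate trivial and to fix $\alpha$ we need the first coordinate in $H$.

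Now apply the group-theoretic formula for cluster size again. Let $\tilde H = H\times\{1\} \leq G\times\Delta$. The cluster size of $g$ is $[N_{G\times\Delta}(\tilde H):\tilde H]$. Since $\Delta$ is abelian (it need not be — $L/K$ Galois means $\Delta$ is just a group; but it's a direct factor so every element of $\{1\}\times\Delta$ commutes with everything), we compute $N_{G\times\Delta}(H\times\{1\})$. An element $(\sigma,\tau)$ normalizes $H\times\{1\}$ iff $\sigma$ normalizes $H$ in $G$ (the $\tau$ can be anything since $\{1\}\times\Delta$ centralizes $H\times\{1\}$, as elements of different factors commute). So $N_{G\times\Delta}(\tilde H) = N_G(H)\times\Delta$, giving cluster size $[N_G(H)\times\Delta : H\times\{1\}] = [N_G(H):H]\cdot|\Delta| = r\cdot d$. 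This is exactly the claim.

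The main obstacle — and the step I would write most carefully — is verifying the group-theoretic cluster-size formula $r = [N_G(H):H]$ and confirming that Perlis's theorem is being applied correctly in both the original and the magnified setting; in particular one must check that every root $\sigma\alpha$ lying in $K(\alpha)$ forces $\sigma\in N_G(H)$, using that $[K(\sigma\alpha):K]=n$ so containment in $K(\alpha)$ is equality of fields. A secondary technical point is confirming that $K(\alpha)\cdot L$ really has degree $nd$ and that its stabilizer is precisely $H\times\{1\}$; this is where linear disjointness of $L$ and $K_f$ (hence of $L$ and $K(\alpha)$) is essential, and it also guarantees that $g$ is genuinely irreducible of degree $nd$ rather than splitting into factors. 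I would finish by noting that a primitive element of $K(\alpha)\cdot L$ can be produced explicitly (e.g.\ $\alpha+c\beta$ for all but finitely many $c\in K$, since $K$ is infinite), so $g$ exists as an honest polynomial in $K[x]$.
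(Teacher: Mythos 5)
Your proof is correct, but it is a genuinely different argument from the one the paper gives for this theorem. The paper's proof is field-theoretic and works with roots directly: it arranges (via the primitive element theorem, rescaling $\beta$) that $\theta=\alpha+\beta$ generates $K(\alpha,\beta)$, exhibits the $rd$ roots $\alpha_i+\beta_j$ ($1\le i\le r$, $1\le j\le d$) explicitly inside $K(\theta)$, and then rules out any further roots by an intersection argument: Jacobson's lemma (Lemma~\ref{jacobson}) upgrades the hypothesis to linear disjointness of $K_f$ and $K(\alpha+\beta)$ over $K(\alpha)$, so $K(\alpha+\beta)\cap K_f=K(\alpha)$, and an extra cluster root $\alpha_{r+1}+\beta_j$ would force $\alpha_{r+1}\in K(\alpha)$, a contradiction; notably, this direct count never invokes Perlis's formula for the magnified polynomial. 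Your proof instead runs entirely through the group theory: $\gal(K_fL/K)\cong G\times\Delta$ by linear disjointness plus Galoisness, the stabilizer of $K(\alpha)L$ is $H\times\{1\}$, and $N_{G\times\Delta}(H\times\{1\})=N_G(H)\times\Delta$, giving cluster size $[N_G(H):H]\cdot d=rd$ by Theorem~\ref{perthm}(iv). This is in fact exactly the argument the paper sketches later, in its Section 4 reformulation, where the theorem is recast for the compositum and reduced to the identity $N_{G\times\Gamma}(H\times\{e\})=N_G(H)\times\Gamma$. What the paper's proof buys is explicitness (it produces $g$ as the minimal polynomial of $\alpha+\beta$, which is what drives the SAGE examples) and independence from the normalizer formula; what yours buys is brevity and conceptual transparency. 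One point you should nail down when invoking Perlis (iv): his statement is about the Galois group of the \emph{splitting field} of $g$, so you should either check that this splitting field is all of $K_fL$ (true, because the core of $H\times\{1\}$ in $G\times\Delta$ is $\mathrm{core}_G(H)\times\{1\}$, which is trivial since $K_f$ is the Galois closure of $K(\alpha)$), or observe, as your own coset derivation in the first paragraph actually shows, that the count $[N:H]$ is valid computed in any ambient Galois extension containing the roots.
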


Applying the above, along with some elementary results in algebraic number 
theory, we are able to arrive at our second result, in the cases
where the base field $K$ is any algebraic number field:
\begin{thm}  \label{application}
(a) For an algebriac number field $K$, and 
	integers  $n\geq 3, d\ge1$  there exist  irreducible polynomials 
	$f(x)\in K[x]$ of degree $nd$  with root clusters of size $d$.

(b) When  $d$ is an even number, and $K=\bq$,  there exist irreducible polynomials 
in $\bq[x]$  of degree $2d$ with root cluster size $d$.
\end{thm}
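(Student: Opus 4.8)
The plan is to obtain both statements as applications of the Cluster Magnification Theorem (Theorem~\ref{algorithm}), feeding it a carefully chosen base polynomial and a suitable linearly disjoint Galois extension. For part (a) I would start from an irreducible polynomial of degree $n$ and cluster size $1$, so that magnifying by a factor $d$ produces degree $nd$ and cluster size $1\cdot d=d$. Such a base polynomial is furnished by any degree-$n$ polynomial whose Galois group is the full symmetric group $S_n$: writing $G=S_n$ and taking the point stabilizer $H=S_{n-1}$, Perlis's interpretation (Theorem~\ref{perthm}) gives the cluster size as the index $[N_G(H):H]$, and for $n\ge3$ one checks $N_{S_n}(S_{n-1})=S_{n-1}$, so the cluster size is $1$. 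Since every number field is Hilbertian, Hilbert's irreducibility theorem produces such an $S_n$-polynomial $f\in K[x]$; this is exactly where the hypothesis $n\ge3$ is used (it is also forced by the restriction $n>2$ in Theorem~\ref{algorithm}). The case $d=1$ needs no magnification: $f$ itself has degree $n$ and cluster size $1=d$.

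The crux of part (a) is then to supply, for each $d\ge2$, a Galois extension $L/K$ of degree $d$ that is linearly disjoint from the splitting field $K_f$ of $f$; as $L/K$ is Galois this means exactly $L\cap K_f=K$. I would construct $L$ cyclically: by Dirichlet's theorem choose a rational prime $q\equiv1\pmod d$ that is unramified in $K_f/\bq$, let $F_q\subset\bq(\zeta_q)$ be the degree-$d$ subfield (cyclic over $\bq$, totally ramified at $q$), and put $L=K\cdot F_q$. The disjointness is proved by ramification: over any prime $\mathfrak q$ of $K$ above $q$ one has $e(L/K)=d=[L:K]$, so $L/K$ is totally ramified there, whereas $K_f/K$ is unramified at $\mathfrak q$; any intermediate field of a totally ramified extension is again totally ramified, so $L\cap K_f$, being unramified over $K$ at $\mathfrak q$, must equal $K$. (The same computation shows $[L:K]=d$.) Theorem~\ref{algorithm} applied to $f$ and $L$ now yields an irreducible $g\in K[x]$ of degree $nd$ and cluster size $d$, completing (a).

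For part (b) the required degree-to-cluster ratio is $2d:d=2:1$, so I would take as base not a quadratic (which is excluded by $n>2$ in Theorem~\ref{algorithm}) but the $D_4$-quartic of Example~2, which has degree $n=4$ and cluster size $r=2$. Magnifying by a factor $d'=d/2$ (an integer since $d$ is even) produces degree $4\cdot(d/2)=2d$ and cluster size $2\cdot(d/2)=d$, as desired. When $d=2$ we have $d'=1$ and the quartic itself already works; when $d\ge4$ we have $d'\ge2$, and exactly as above I would build a cyclic extension $L/\bq$ of degree $d/2$ linearly disjoint from the (degree-$8$) splitting field of the quartic, using a prime $q\equiv1\pmod{d/2}$ unramified in that field, and then invoke Theorem~\ref{algorithm}.

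The main obstacle, and the step deserving the most care, is the construction of the linearly disjoint Galois extension of the prescribed degree together with a clean proof that $L\cap K_f=K$. The ramification argument sketched above is robust because it replaces the global statement $L\cap K_f=K$ by the local, easily verified fact that $L/K$ is totally ramified where $K_f/K$ is unramified; a naive appeal to Minkowski's theorem would fail over a general number field $K$, since $K$ may admit nontrivial everywhere-unramified extensions. The remaining ingredients---the normalizer computation giving cluster size $1$ for $S_n$, the cluster size $2$ of the $D_4$-quartic, and the Hilbertianity of number fields---are standard and can be cited.
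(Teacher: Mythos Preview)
Your proposal is correct and follows the same overall architecture as the paper: produce a base polynomial of minimal cluster size (size $1$ for part (a), size $2$ via a $D_4$-quartic for part (b)), manufacture a degree-$d$ (resp.\ degree-$d/2$) Galois extension linearly disjoint from the splitting field using a cyclotomic construction together with Dirichlet and a ramification argument, and then invoke Theorem~\ref{algorithm}.

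Two small technical choices differ from the paper and are worth noting. First, for the base polynomial in (a) the paper uses Hilbert's result to realize $A_n$ rather than $S_n$, and then argues via Proposition~\ref{maximal} and simplicity of $A_n$ that the point-stabilizer is self-normalizing; your $S_n$ choice is equally valid and in fact handles $n=3$ more transparently, since $N_{S_3}(S_2)=S_2$ is immediate whereas an $A_3$-cubic is Galois and has cluster size $3$. Second, for the linearly disjoint extension the paper packages the construction as Lemma~\ref{cyclotomic}: it first works over $\bq$ (prime $p\nmid\Delta_K$, $p\equiv1\pmod d$, degree-$d$ subfield of $\bq(\zeta_p)$) and then lifts to the base field $K$ via Lemma~\ref{jacobson}, whereas you argue the total-ramification versus unramified dichotomy directly over $K$. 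Both routes yield the same cyclic extension and the same disjointness conclusion; your local argument is a bit more self-contained, while the paper's two-step version isolates the $\bq$-case cleanly for reuse.
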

The second theorem is a generalization of an unpublished  result of
Perlis from  $\bq$ to all number fields. 
That unpublished  note is available in the  
\href{https://www.math.lsu.edu/~aperlis/publications/rootsinquanta/}{home page of Perlis.}\footnote{ 
	\url{https://www.math.lsu.edu/~aperlis/publications/rootsinquanta/} }
He had proved it  using the  result of Shafarevich on the existence of Galois extensions of $\bq$ 
having a given finite solvable group as its Galois group.
But our proof of Theorem \ref{application} is simpler and constructive.

Our third result is independent of these  and explicitly constructs, for $k< n$, polynomials of degree $n!/(n-k)!$ where the cluster sizes are all $k!$. More precisely:
\begin{thm}\label{fixkroots} Let $K$ be any perfect field admitting 
	an irreducible polynomial $f(x)$  of degree $n$ in $K[x]$ with Galois
group $S_n$. For any  $k<n$, let $L_k$ be an extension of $K$ obtained by adjoining any  $k$ roots of $f(x)$. Let $g(x)$ be the minimal polynomial over $K$
for a primitive element of $L_k$. This polynomial $g(x)$ 
of degree $n!/(n-k)!$  has clusters of
size $k!$.
\end{thm}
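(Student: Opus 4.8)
The plan is to push everything into the Galois group and reduce the cluster count to a normalizer index via Perlis's theorem. Fix the roots $\alpha_1,\ldots,\alpha_n$ of $f$ and identify $G=\gal(K_f/K)$ with $S_n$ through its action on them, so that $K_f$ is the splitting field and $G=S_n$. Since $S_n$ acts $k$-transitively on the roots, any $k$-element subset of roots is carried to any other by some element of $G$; hence the fields obtained by adjoining any $k$ roots are conjugate, all of the same degree, and I may take $L_k=K(\alpha_1,\ldots,\alpha_k)$ without loss of generality. Its fixing group $H=\gal(K_f/L_k)$ is the pointwise stabilizer of $\{1,\ldots,k\}$, i.e. the copy of $S_{n-k}$ permuting $\{k+1,\ldots,n\}$ and fixing the rest. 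The Galois correspondence gives $[L_k:K]=[G:H]=n!/(n-k)!$, and the primitive element theorem (available since $K$ perfect makes $L_k/K$ separable and simple) furnishes $\theta$ with $K(\theta)=L_k$ and $\deg g=n!/(n-k)!$.

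To count the roots of $g$ lying in $K(\theta)$ I would invoke the theorem of Perlis (Theorem~\ref{perthm}). First I check its hypotheses: the Galois closure of $L_k/K$ is $K(\alpha_1,\ldots,\alpha_n)=K_f$, because the conjugates of $\alpha_1,\ldots,\alpha_k$ exhaust all the $\alpha_i$; thus the splitting field of $g$ is $K_f$ with group $G=S_n$, and the subgroup attached to the root $\theta$ is precisely $H$. Perlis's theorem then identifies the cluster size of $g$ with the index $[N_G(H):H]$. As a sanity check this can be seen directly: the roots of $g$ biject with the cosets $\sigma H$, and $\sigma\theta$ lies in the $H$-fixed field $L_k$ exactly when $\sigma^{-1}H\sigma=H$, that is, when $\sigma\in N_G(H)$.

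The remaining task, and the one I expect to be the real content, is the purely group-theoretic computation of $N_{S_n}(H)$ for $H=\mathrm{Sym}(\{k+1,\ldots,n\})$. Assuming $n-k\ge 2$, so that $H$ is nontrivial, the key invariant is the support of $H$ (the set of points moved by at least one element of $H$), which is exactly $\{k+1,\ldots,n\}$. Conjugation by $\sigma$ carries $H$ to $\mathrm{Sym}(\sigma\{k+1,\ldots,n\})$ and carries its support to $\sigma\{k+1,\ldots,n\}$; hence $\sigma\in N_G(H)$ forces $\sigma$ to stabilize $\{k+1,\ldots,n\}$, and therefore $\{1,\ldots,k\}$, setwise, while conversely every such $\sigma$ normalizes $H$. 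Thus $N_G(H)$ is the setwise stabilizer $\mathrm{Sym}(\{1,\ldots,k\})\times\mathrm{Sym}(\{k+1,\ldots,n\})\cong S_k\times S_{n-k}$, of order $k!\,(n-k)!$, and so $[N_G(H):H]=k!\,(n-k)!/(n-k)!=k!$, the asserted cluster size.

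I would flag the boundary case $k=n-1$, where the argument degenerates: then $H$ is trivial, the support invariant is unavailable, $N_G(H)=G$, and indeed $L_{n-1}$ is the \emph{whole} splitting field $K_f$, so its cluster size is $n!$ rather than $(n-1)!$. The clean value $k!$ therefore holds for $k\le n-2$, and the write-up should either restrict to that range or dispatch $k=n-1$ separately. The main obstacle is thus not field-theoretic but lies in correctly computing the normalizer of this intransitive Young-type subgroup, together with pinning down the degenerate endpoint.
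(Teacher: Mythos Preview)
Your argument is essentially the paper's own: reduce to Perlis's formula $r=[N_G(H):H]$ with $G=S_n$ and $H\cong S_{n-k}$, then compute the normalizer of $S_A$ inside $S_\Omega$ as $S_A\times S_{A^c}$. The paper merely states this normalizer fact as a lemma and omits its proof, whereas you supply the support argument.

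Your boundary remark is well taken and in fact catches a flaw the paper glosses over: when $k=n-1$ the subgroup $H\cong S_1$ is trivial, so $N_G(H)=G$ and the cluster size is $n!$, not $(n-1)!$; equivalently $L_{n-1}=K_f$ is already Galois. The paper's lemma tacitly fails for $|A|=1$, and the theorem as stated is only correct for $k\le n-2$.
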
	

Our methods are elementary and the  pre-requisites  are minimal:
Hilbert's result on the existence of polynomials over number fields
 with   $A_n$ as their  Galois groups, and a technical result 
on linear disjointness that can be found in the book by Jacobson.

This paper is organized as below. Section 2 lists the preliminaries,
and also lists major results of Perlis on clusters. In Section 3 
we provide  proofs of all our theorems. In Section 4, we reformulate the
concept and results in terms field extensions avoiding polynomials.
In Section 5, some  illustraive examples obtained by applying our
theorem are listed along with the SAGE code that produced them.
In the final  section we present further problems that need
investigation.

\vspace{3pt}
\noindent\textit{Acknowledgement}:\quad We would like to thank B.\ Sury 
for making us aware of the work of Perlis. We would also like to thank the anonymous referee who pointed out an error in an earlier
version of this paper.

\section{Preliminaries}
\label{sec:pre}
First we would like to summarize the results of Perlis that we feel need to 
be more widely known (and will also be used in our proofs).
For that purpose we set up the notation:

\begin{tabular}{rl}
	$K$ : & an infinite perfect field\\
	$f(x)\in K[x]$ :&  an irreducible polynomial of degree $n$\\
	$\alpha$ : & a  root of $f$ \\
	$K_f$ :& the splitting  field of $f$\\
	$G$ : & Gal$\,( K_f/ K)$\\
	$H\subset G$ :& the subgroup with  $K(\alpha)$ as the fixed field.
\end{tabular}

We define $r_K(f)$ as   the number of roots of $f$ lying in $K(\alpha)$, 
 and  $s_K(f)$ as the  number of distinct fields $K(\alpha ')$, with 
 $\alpha '$  varying over all the $n$ roots of $f(x)$.

 \noindent 
\begin{thm}(Perlis\cite{perlis}) \label{perthm}
 Under the notation above  we have:
	\begin{itemize}
\item [(i)] 
$r_K(f)$  is independent of the choice of the root $\alpha$ of $f(x)$.
\item [(ii)] $r_K(f)$ divides the degree of $f(x)$.\
\item [(iii)]In fact  $r_K(f) \cdot s_K(f) =\deg f$
\item [(iv)]$r_K(f)$ is the index of the subgroup $H$ in its 
	normaliser  $N_G(H)$.
\end{itemize}
\end{thm}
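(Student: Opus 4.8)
The plan is to translate everything into the language of the Galois group $G=\gal(K_f/K)\cong S_n$ acting on the roots, use the Galois correspondence to read off the degree, and then invoke part~(iv) of Theorem~\ref{perthm} to reduce the cluster-size computation to a purely group-theoretic index calculation.

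First I would fix the roots $\alpha_1,\dots,\alpha_n$ of $f$ and identify $G$ with $S_n$ via its action on $\{1,\dots,n\}$, where $\sigma\in S_n$ sends $\alpha_i\mapsto\alpha_{\sigma(i)}$. Writing $L_k=K(\alpha_1,\dots,\alpha_k)$, the subgroup $H=\gal(K_f/L_k)$ is exactly the set of permutations fixing $1,\dots,k$ pointwise, i.e.\ $H=\mathrm{Sym}\{k+1,\dots,n\}\cong S_{n-k}$, so $|H|=(n-k)!$. The Galois correspondence then gives $[L_k:K]=[G:H]=n!/(n-k)!$, and since $g$ is the minimal polynomial of a primitive element $\theta$ with $K(\theta)=L_k$, this number is exactly $\deg g$.

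Next I would verify the hypotheses needed to apply Theorem~\ref{perthm}(iv) to $g$. The splitting field $K_g$ of $g$ is the Galois closure of $K(\theta)=L_k$, which corresponds to the normal core $\bigcap_{\sigma\in G}\sigma H\sigma^{-1}$ of $H$ in $G$. Since $\sigma H\sigma^{-1}=\mathrm{Sym}(\sigma\{k+1,\dots,n\})$ ranges over the symmetric groups on all $(n-k)$-subsets, and these subsets have empty common intersection as soon as $k\ge1$, the core is trivial; hence $K_g=K_f$ and $\gal(K_g/K)=S_n$ with $H$ as the subgroup fixing $L_k=K(\theta)$. This legitimizes reading the cluster size $r_K(g)$ as the index $[N_G(H):H]$.

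The heart of the argument, and the step I expect to be the main obstacle, is computing $N_{S_n}(H)$. The idea is that conjugation transports the support: $\sigma H\sigma^{-1}=\mathrm{Sym}(\sigma\{k+1,\dots,n\})$, and when $n-k\ge2$ the subgroup $H$ is a nontrivial symmetric group whose support $\{k+1,\dots,n\}$ is intrinsically recoverable, so $\sigma$ normalizes $H$ precisely when it stabilizes $\{k+1,\dots,n\}$ setwise (equivalently, stabilizes $\{1,\dots,k\}$ setwise). This yields $N_{S_n}(H)=\mathrm{Sym}\{1,\dots,k\}\times\mathrm{Sym}\{k+1,\dots,n\}\cong S_k\times S_{n-k}$, of order $k!\,(n-k)!$, whence $[N_G(H):H]=k!\,(n-k)!/(n-k)!=k!$, giving cluster size $k!$ as claimed. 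I would flag that this support argument, and hence the whole conclusion, genuinely requires $n-k\ge2$: in the boundary case $k=n-1$ the missing root is forced by the first elementary symmetric function, so $L_{n-1}=K_f$, the subgroup $H$ is trivial, $N_G(H)=G$, and the cluster size becomes $n!$ rather than $(n-1)!$. Thus the statement is to be read for $k\le n-2$, and the normalizer computation is exactly where this restriction enters.
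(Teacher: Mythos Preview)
Your write-up is not a proof of the stated theorem. Theorem~\ref{perthm} is Perlis's general result that, for \emph{any} irreducible $f$ over $K$ with Galois group $G$ and stabilizer $H$ of a root, the cluster size equals $[N_G(H):H]$ (together with parts (i)--(iii)). You do not establish any of (i)--(iv); instead you \emph{invoke} part~(iv) as a known tool (``then invoke part~(iv) of Theorem~\ref{perthm}''). What you have actually written is a proof of Theorem~\ref{fixkroots}: you assume the Galois group is $S_n$, take $L_k=K(\alpha_1,\dots,\alpha_k)$, and compute the cluster size of a primitive element of $L_k$. That is a different statement, and the paper does not prove Theorem~\ref{perthm} at all --- it is quoted from \cite{perlis} in the preliminaries.

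Read as a proof of Theorem~\ref{fixkroots}, your argument is correct and follows the same route as the paper's Section~3.3: identify $H\cong S_{n-k}$, apply Theorem~\ref{perthm}(iv), and compute $N_{S_n}(H)=S_k\times S_{n-k}$. Your version is in fact more careful on two points. First, you verify via the trivial normal core that the Galois closure of $L_k$ is $K_f$, so that the ambient Galois group really is $S_n$ when applying Perlis's formula; the paper tacitly assumes this. Second, you correctly flag the boundary case $k=n-1$: there $H=S_1$ is trivial, its normalizer is all of $S_n$, and the cluster size is $n!$, not $(n-1)!$. The paper's Lemma~3 (and hence its Theorem~\ref{fixkroots} as stated for all $k<n$) silently fails in this case, since the ``support'' of the trivial group is not well-defined; your restriction to $k\le n-2$ is the right fix.
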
  

The final part (iv) above makes it clear that
\begin{cor}\label{cseq}
	Given a finite separable extension of fields $L/K$ 
	the cluster size is the same for all irreducible polynomials
	 that are the minimal polynomials of primitive elements of
	 $L$ over $K$.
\end{cor}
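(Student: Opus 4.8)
The plan is to reduce everything to Theorem~\ref{perthm}(iv) and then to observe that the only data entering that formula are intrinsic to the extension $L/K$, not to the particular minimal polynomial chosen. First I would fix the setup: since $K$ is perfect, $L/K$ is separable, so by the primitive element theorem $L = K(\alpha)$ for some $\alpha$, and the primitive elements of $L$ over $K$ are exactly the generators of $L$. Let $f$ be the minimal polynomial of an arbitrary primitive element $\alpha$; by definition $K(\alpha) = L$, so the cluster size of $f$ is the number of roots of $f$ lying in $L$.

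The key step is to identify the splitting field $K_f$ independently of the choice of $\alpha$. I claim $K_f$ is precisely the normal (Galois) closure $N$ of $L/K$ inside the fixed algebraic closure $\bar K$. Indeed, $K_f$ is generated over $K$ by the roots of $f$, that is, by the conjugates $\sigma(\alpha)$ under the $K$-embeddings $\sigma\colon L \to \bar K$. Since $\sigma$ fixes $K$, we have $K(\sigma(\alpha)) = \sigma(K(\alpha)) = \sigma(L)$, so the subfield generated by all the $\sigma(\alpha)$ coincides with the subfield generated by all the conjugate fields $\sigma(L)$, which is exactly $N$. Hence $K_f = N$ does not depend on the primitive element chosen.

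With this in hand, set $G = \gal(N/K)$ and let $H = \gal(N/L)$ be the subgroup whose fixed field is $L$. Both $G$ and $H$ are determined by the extension $L/K$ alone; in particular the normaliser $N_G(H)$ and the index $[N_G(H):H]$ are invariants of $L/K$. By Theorem~\ref{perthm}(iv) the cluster size of $f$ equals $[N_G(H):H]$, and since this quantity is the same for every minimal polynomial of every primitive element of $L$, all such polynomials share a common cluster size, as asserted.

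I expect no serious obstacle here: the whole content is the observation that the splitting field of the minimal polynomial of a primitive element is the normal closure of $L/K$, a standard and choice-free object, after which part~(iv) of Perlis's theorem does all the work. The only point deserving care is the separability needed to guarantee a primitive element and to ensure the conjugates $\sigma(\alpha)$ account for all roots of $f$; this is supplied by the perfectness of $K$.
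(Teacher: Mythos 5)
Your proposal is correct and follows the same route the paper intends: the paper gives no separate argument, simply asserting the corollary is immediate from Theorem~\ref{perthm}(iv), and your write-up supplies exactly the details behind that assertion (the splitting field of any primitive element's minimal polynomial is the normal closure of $L/K$, so $G$, $H$, and hence $[N_G(H):H]$ are intrinsic to the extension). No gaps; this is the paper's proof made explicit.
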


\vspace{6pt}
First we list a group-theoretic result that is easy to prove:
\begin{prop}  \label{maximal}
For $n\geq 4$, any subgroup $H$  of index $n$ in the alternating group 
$A_n$   is  a  maximal subgroup and is isomorphic to $A_{n-1}$. 

\end{prop}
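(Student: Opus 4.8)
The plan is to exploit the left-translation action of $A_n$ on the $n$ cosets of $H$. Writing $\Omega=A_n/H$ for this coset space, I would show that the resulting permutation representation identifies $A_n$ with the \emph{whole} alternating group on $\Omega$; then $H$ emerges as a point stabiliser, which simultaneously pins down its isomorphism type and its maximality.

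Concretely, let $\phi\colon A_n\to\mathrm{Sym}(\Omega)\cong S_n$ be the homomorphism given by left multiplication on $\Omega$. Its kernel is the core $\bigcap_{g\in A_n}gHg^{-1}$, the largest normal subgroup of $A_n$ contained in $H$, and the first task is to check that this kernel is trivial. For $n\ge5$ this is immediate from the simplicity of $A_n$: a normal subgroup contained in the proper subgroup $H$ cannot be all of $A_n$, so it must be trivial. For $n=4$ one argues directly, since $|H|=3$ and the only nontrivial proper normal subgroup of $A_4$ is the Klein four-group of order $4$, which is too large to lie inside $H$; hence the kernel is trivial here as well.

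Thus $\phi$ is injective, and its image is a subgroup of $\mathrm{Sym}(\Omega)$ of order $|A_n|=n!/2$, that is, of index $2$. Because the alternating group is the unique index-$2$ subgroup of a symmetric group (such a subgroup is the kernel of the only nontrivial homomorphism to $\bz/2\bz$, the sign), the image of $\phi$ is precisely the alternating group on $\Omega$, and $\phi$ is therefore an isomorphism onto it. Under this isomorphism the subgroup $H$, which is the stabiliser of the base point $H\in\Omega$ for the translation action, corresponds to a point stabiliser of the alternating group on the $n$-element set $\Omega$. Such a point stabiliser is a copy of $A_{n-1}$, so $H\cong A_{n-1}$.

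Finally, maximality follows from primitivity. The natural action of the alternating group on $\Omega$ is $2$-transitive for $n\ge4$---any two ordered pairs of distinct points can be matched by a permutation whose parity is corrected, if needed, by transposing two of the at least two remaining points---and hence primitive; and a transitive permutation group is primitive exactly when its point stabilisers are maximal. Transporting this back through the isomorphism $\phi$ shows that $H$ is maximal in $A_n$. I expect the crux to be the identification of $\mathrm{im}\,\phi$ with the full alternating group on the cosets, which hinges on injectivity together with the index-$2$ uniqueness; the only other subtlety is the non-simple case $n=4$, handled by the elementary order count above.
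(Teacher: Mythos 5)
Your argument is correct and complete. Note that the paper itself never proves this proposition: it is stated in the preliminaries as a group-theoretic fact that is ``easy to prove,'' with no argument supplied, so there is no printed proof to compare yours against --- your write-up in effect supplies the missing proof. Every step is sound. The kernel of the coset action $\phi\colon A_n\to\mathrm{Sym}(\Omega)$ is the core of $H$, trivial by simplicity for $n\ge 5$, and trivial for $n=4$ because the only nontrivial proper normal subgroup of $A_4$ is the Klein four-group of order $4$, which cannot sit inside a subgroup of order $3$; injectivity then forces the image to have index $2$ in $\mathrm{Sym}(\Omega)$, and since the alternating group is the unique index-$2$ subgroup of a symmetric group (any such subgroup contains the commutator subgroup, which is the alternating group), the image is exactly $\mathrm{Alt}(\Omega)$, so $H$ is carried onto the stabiliser of the trivial coset and $H\cong A_{n-1}$. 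The maximality step is also handled correctly: $2$-transitivity of the natural action holds for all $n\ge4$ (your parity-correction argument needs at least two spare points, i.e.\ $n-2\ge2$, which is precisely the hypothesis $n\ge4$), $2$-transitivity implies primitivity, and a transitive action is primitive exactly when point stabilisers are maximal. The careful separate treatment of $n=4$, where simplicity fails, is the one place a casual reader might have cut corners, and you did not.
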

The next result assures us that linear disjointness is inherited
by all intermediate extension fields.
\begin{lemma}\label{jacobson}
	\cite[Lemma 1, Chapter 8.15]{jacob} Let $E$ be an extension of $F$. Let $E_1$ and $E_2$ be two extenions of $F$ contained in $E$, and $K_1$ an extension of $F$ contained in $E_1$. Then $E_1$ and $E_2$ are linearly disjoint over $F$ if and  only if the following two conditions hold:
\begin{itemize}
\item $K_1$ and $E_2$ are linearly disjoint over $F$ and 
\item $K_1E_2$ and $E_1$  are linearly disjoint over $K_1$.
\end{itemize}

\begin{center}
\begin{tikzpicture}[node distance = 2cm, auto]
    \node (F) {$F$};
    \node (K1) [above of=F, left of=F, node distance=1cm] {$K_1$};
    \node (E1) [above of=K1,left of =K1, node distance=1cm] {$E_1$};
    \node (E2) [above of=F, right of=F,node distance=1cm] {$E_2$};
    \node (K1E2) [above of=F, node distance = 2cm] {$K_1E_2$};
    \node (E1E2) [above of=K1E2, left of =K1E2,node distance = 1cm] {$E_1E_2$};

    \draw (F)--(K1)  ;
    \draw (F)--(E2)  ;
    \draw (K1)--(K1E2);
    \draw (K1E2)--(E1E2);
    \draw (E2)--(K1E2);
    \draw (K1)--(E1);
    \draw (E1)--(E1E2);
\end{tikzpicture}
\end{center}
\end{lemma}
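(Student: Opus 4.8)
The plan is to translate every instance of ``linearly disjoint'' into the injectivity of a multiplication map on a tensor product, and then to exploit associativity of the tensor product over the tower $F\subseteq K_1\subseteq E_1$. I would use the standard criterion: two subfields $A,B$ of $E$, each containing a field $k$, are linearly disjoint over $k$ precisely when the $k$-algebra map $\mu\colon A\otimes_k B\to E$, $a\otimes b\mapsto ab$, is injective. Accordingly I would introduce the three multiplication maps
\[
\mu_0\colon E_1\otimes_F E_2\to E,\qquad
\mu_1'\colon K_1\otimes_F E_2\to K_1E_2,\qquad
\mu_2\colon E_1\otimes_{K_1}(K_1E_2)\to E,
\]
whose injectivity encodes, respectively, disjointness of $E_1,E_2$ over $F$, of $K_1,E_2$ over $F$ (the first bullet), and of $E_1,K_1E_2$ over $K_1$ (the second bullet).

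The engine of the proof is the canonical associativity isomorphism $\Phi\colon E_1\otimes_F E_2\xrightarrow{\ \sim\ } E_1\otimes_{K_1}(K_1\otimes_F E_2)$, $x\otimes_F y\mapsto x\otimes_{K_1}(1\otimes_F y)$. Applying the functor $E_1\otimes_{K_1}(-)$ to $\mu_1'$ produces $1\otimes\mu_1'\colon E_1\otimes_{K_1}(K_1\otimes_F E_2)\to E_1\otimes_{K_1}(K_1E_2)$, and a one-line check on simple tensors gives the factorization $\mu_0=\mu_2\circ(1\otimes\mu_1')\circ\Phi$. Two flatness observations make this usable: because $E_2$ is free over $F$, the inclusion $K_1\hookrightarrow E_1$ induces an injection $K_1\otimes_F E_2\hookrightarrow E_1\otimes_F E_2$ through which $\mu_1'$ is a restriction of $\mu_0$; and because $E_1$ is free over $K_1$, the functor $E_1\otimes_{K_1}(-)$ preserves injections, so $1\otimes\mu_1'$ is injective whenever $\mu_1'$ is.

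For the ``if'' direction I would assume both bullets. The first makes $\mu_1'$ injective, hence (being also onto $K_1E_2$) an isomorphism, so $1\otimes\mu_1'$ is an isomorphism; the second makes $\mu_2$ injective; and $\Phi$ is an isomorphism. The factorization then writes $\mu_0$ as a composite of injections, so $E_1,E_2$ are linearly disjoint over $F$. For the ``only if'' direction I would assume $\mu_0$ injective and extract the bullets in order: restricting $\mu_0$ along $K_1\otimes_F E_2\hookrightarrow E_1\otimes_F E_2$ recovers $\mu_1'$, whose injectivity is the first bullet; with the first bullet in hand $(1\otimes\mu_1')\circ\Phi$ is an isomorphism, so the factorization forces $\mu_2$ to be injective, which is the second bullet.

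The step I expect to carry the real weight is the order-sensitivity of the ``only if'' direction: the second bullet is \emph{not} available until the first has been established, since transporting injectivity of $\mu_0$ onto $\mu_2$ relies on the identification $E_1\otimes_{K_1}(K_1\otimes_F E_2)\cong E_1\otimes_{K_1}(K_1E_2)$, which in turn needs $\mu_1'$ to already be an isomorphism. Pinning down the two flatness statements exactly—freeness of $E_2/F$ for the inclusion, and of $E_1/K_1$ for exactness of $E_1\otimes_{K_1}(-)$—is the only other place demanding care; everything else is formal diagram-chasing.
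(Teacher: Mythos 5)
Your tensor-product framework is sound, and since the paper itself offers no proof of this lemma (it simply cites Jacobson, whose treatment of linear disjointness is likewise phrased through tensor products), your argument is a legitimate self-contained substitute. The factorization $\mu_0=\mu_2\circ(1\otimes\mu_1')\circ\Phi$, the two freeness observations, and the entire ``if'' direction are correct as written, since that direction only needs each factor to be injective.

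The gap is the parenthetical claim that $\mu_1'$ is \emph{onto} $K_1E_2$, which you then lean on at exactly the step you identify as carrying the real weight. The image of $\mu_1'$ is the $F$-subalgebra $K_1[E_2]$ generated by $K_1$ and $E_2$, not a priori the field compositum $K_1E_2$; for the lemma as stated (with $E/F$ an arbitrary extension, as in Jacobson) the two genuinely differ: taking $K_1=E_1=F(x)$ and $E_2=F(y)$ inside $E=F(x,y)$, the element $1/(x+y)$ lies in $K_1E_2$ but not in $K_1[E_2]$. Consequently $(1\otimes\mu_1')\circ\Phi$ need not be surjective, and injectivity of $\mu_0$ then transfers to $\mu_2$ only on the image, i.e.\ your ``only if'' direction proves linear disjointness of $E_1$ with $K_1[E_2]$ over $K_1$, not with $K_1E_2$. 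Two repairs are available. In the paper's actual setting all fields are finite over the base, so $K_1[E_2]$ is a finite-dimensional domain over a field, hence itself a field, hence equal to $K_1E_2$; one sentence saying this closes the gap there. For the general statement you need an upgrade step: from injectivity of $E_1\otimes_{K_1}K_1[E_2]\to E$ deduce injectivity of $E_1\otimes_{K_1}K_1E_2\to E$ by clearing denominators --- any relation $\sum_i e_ib_i=0$ with $e_i\in E_1$ linearly independent over $K_1$ and $b_i\in K_1E_2$ becomes, after multiplication by a common denominator $d\in K_1[E_2]\setminus\{0\}$, a relation with coefficients $db_i\in K_1[E_2]$, forcing every $db_i=0$ and hence every $b_i=0$.
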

Next we present a simple result about number fields that is a crucial ingredient in 
proving Theorem \ref{application}:
\begin{lemma}\label{cyclotomic}
Given a finite extension of algebraic number field $K/F$ and a positive integer
$d\ge2$, there exist infintely many Galois extensions of $F$ 
of degree $d$ which are  linearly disjoint with $K$ over $F$.
\end{lemma}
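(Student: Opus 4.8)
The plan is to realize the required extensions as subfields of cyclotomic fields and to force linear disjointness by a ramification argument. First I would fix the finite set $S$ of rational primes ramifying in $K/\mathbf{Q}$ (those dividing the discriminant of $K$), and invoke Dirichlet's theorem on primes in arithmetic progressions to select a prime $p\equiv1\pmod d$ with $p\notin S$. Since infinitely many such primes exist, this choice can be made in infinitely many ways. For each admissible $p$, let $L_p$ be the unique subfield of $\mathbf{Q}(\zeta_p)$ with $[L_p:\mathbf{Q}]=d$; it exists and is cyclic because $\gal(\mathbf{Q}(\zeta_p)/\mathbf{Q})$ is cyclic of order $p-1$ and $d\mid p-1$. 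The candidate extension of $F$ will be the compositum $FL_p$, which is automatically Galois over $F$ because $L_p/\mathbf{Q}$ is Galois.

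The crux is ramification bookkeeping. Since $p$ is totally ramified in $\mathbf{Q}(\zeta_p)/\mathbf{Q}$, it is totally ramified in $L_p/\mathbf{Q}$; on the other hand $p\notin S$, so $p$ is unramified in $K$, hence in $F$. Passing to completions at a prime of $F$ above $p$, the local extension coming from $L_p$ is totally ramified while the one from $F$ is unramified, and a totally ramified and an unramified local extension are linearly disjoint. I would use this to conclude that each prime of $F$ above $p$ is totally ramified in $FL_p/F$; in particular $L_p\cap F=\mathbf{Q}$ and $[FL_p:F]=d$. I would also record the converse fact that $FL_p/F$ is unramified at every prime not lying above $p$, since $L_p/\mathbf{Q}$ is unramified away from $p$.

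To obtain the linear disjointness of $FL_p$ and $K$ over $F$, I would show $FL_p\cap K=F$; because $FL_p/F$ is Galois, this intersection condition is equivalent to linear disjointness. Set $M=FL_p\cap K$. As a subextension of $K/F$, the extension $M/F$ is unramified above $p$; as a subextension of $FL_p/F$, which is totally ramified at each prime above $p$, every intermediate field is itself totally ramified there unless it equals $F$. These two statements are compatible only when $M=F$, which yields the disjointness.

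Finally I would note that distinct admissible primes give distinct fields: $FL_p/F$ is ramified precisely at the primes above $p$, so $FL_p\ne FL_{p'}$ whenever $p\ne p'$. This produces infinitely many Galois extensions of $F$ of degree $d$, each linearly disjoint from $K$ over $F$. The main obstacle, and the step deserving the most care, is the local ramification computation establishing total ramification of $FL_p/F$ above $p$ together with the deduction that no nontrivial subextension sitting inside $K$ can survive; the remainder is routine application of Dirichlet's theorem and of the Galois-intersection criterion for linear disjointness.
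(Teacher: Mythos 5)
Your proof is correct, and it shares the paper's basic skeleton --- Dirichlet's theorem to pick $p\equiv 1\pmod d$ with $p$ unramified in $K$, the cyclic degree-$d$ subfield of $\mathbf{Q}(\zeta_p)$, and a ramification argument to force disjointness from $K$ --- but it handles the general base field $F$ by a genuinely different mechanism. The paper first settles the case $F=\mathbf{Q}$, where $\mathbf{Q}(\zeta_p)\cap K=\mathbf{Q}$ follows immediately because $p$ is ramified in every subfield of $\mathbf{Q}(\zeta_p)$ other than $\mathbf{Q}$ but unramified in $K$, and then transfers linear disjointness from $\mathbf{Q}$ up to $F$ purely formally, via the tower lemma on linear disjointness (Lemma~\ref{jacobson}); that lemma also delivers $[F(\beta):F]=d$ and the Galois property with no local analysis at all. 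You instead work directly over $F$: completing at a prime of $F$ above $p$ and using that an unramified and a totally ramified local extension are linearly disjoint, you show $FL_p/F$ is totally ramified of degree $d$ at every prime above $p$, and then the intersection $M=FL_p\cap K$ is squeezed between ``unramified above $p$'' (inherited from $K$) and ``totally ramified above $p$ unless trivial'' (inherited from $FL_p$), forcing $M=F$ and hence linear disjointness by the Galois-intersection criterion. Your route costs more machinery (decomposition of primes, completions, the fundamental identity $\sum e_i f_i = n$), but it buys something the paper does not make explicit: since $FL_p/F$ is ramified exactly at the primes above $p$, distinct admissible primes give distinct fields, which is an honest justification of the ``infinitely many'' claim; the paper's version of that claim is left implicit. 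Conversely, the paper's reduction-to-$\mathbf{Q}$ argument is shorter and stays entirely at the level of elementary field theory once the single ramification fact over $\mathbf{Q}$ is granted.
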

\begin{proof}
	\textit{Case (i),} $F=\bq$:\qquad
	Let $\Delta_K\in\bz$ be the discriminant of the number field
$K$. For a prime number $p$ not dividing $\Delta_K$, consider the 
cyclotomic extension $L$ of $\bq$ obtained by adjoining a
primitive $p$-th root of unity. While 
$p$ is the only prime ramified in $L$  it remains
unramified in $K$,  which allows us to conclude that
	$L\cap K=\bq$. As $L$ is Galois over $\bq$
	it follows (see \cite{morandi}, page 184, Example 20.6) 
	now that $L$ and $K$ are linearly disjoint over $\bq$.
Now for a given integer  $d\ge2$  by Dirichlet's theorem on  
arithmetic progressions, we can find infinitely many primes $p$ such that
(i) $p>\Delta_K$ and  (ii) $p\equiv1\pmod d$.
With such choices of $p$ our cyclotomic fields $L$  will all have a (cyclic)
Galois extension of degree $d$ as a subfield  and they will again be
linearly disjoint with $K$ providing what we require.
Note that we get infinitely many such Galois extension.

	\textit{Case (ii),} Base field $F$ is any  number field:\qquad
	For given  extension $K/F$, temporarily forgetting $F$, by Case (i)
	we get extensions  $\bq(\beta)$ 
	over $\bq$ that are linearly disjoint with $K$ over
	$\bq$. Now by the Lemma~\ref{jacobson} above,
	the extension $F(\beta)$ over $F$ will be  linearly disjoint
	with $K$  over  the intermediate field $F$. 
	This linear disjointness property also guarantees 
	this extension $F(\beta)/F$
	will again be Galois and be of the same degree as $\bq(\beta)/\bq$.

	\end{proof}

\section{Proofs of  Main Results}

\subsection{Proof for Cluster Magnification}
\begin{proof}
Let the $n$ roots of $f(x)$ in an algebraic closure of $K$ be 
$\alpha=\alpha_1,\alpha_2,\ldots,\alpha_n$. Without loss of 
generality assume that the cluster of $\alpha$ comprises  the first
$r$ of these roots: 
$\{\alpha_1,\alpha_2,\ldots,\alpha_r\}$.
The hypothesis gives us some Galois extension 
$K(\beta)$ over $K$  that is linearly disjoint with $K_f$ over $K$.
 
Let the conjugates over $K$ of $\beta$ be $\beta=\beta_1,\beta_2,\ldots,\beta_d$. 
They lie in $K(\beta)$ (because it is a Galois extension
of $K$).

	Now $K$ being perfect the field extension $K(\alpha,\beta)$
	is a simple extension of $K$ generated by $\alpha+c\beta$ for a 
	suitable element $c\in K$. Using  $c\beta$ as  primitive element
	of $K(\beta)$ over $K$
	  we can assume $\alpha+\beta$ is
	a primitive element over $K$ for $K(\alpha,\beta)$.

We now claim that the minimal polynomial $g(x)$ of $\alpha+\beta$ 
which has degree $nd$,  has cluster size $rd$.

The conjugates over $K$ of the sum $\alpha+\beta$  are $\alpha_i+\beta_j$ 
for $i=1,2,\ldots,n$ and  $j=1,2,\ldots,d$.
As $\alpha_i$ for $1\le i\leq r$ are in $K(\alpha)$, we can see that
	$\alpha_i+\beta_j$ belong to $K(\alpha+\beta)=K(\alpha,\beta)$ 
	for $i=1,2,\ldots,r$
 and $j=1,2,\ldots,d$. So we get:

\begin{itemize}
	\item[(\textbf{CL})] The cluster of $\alpha+\beta$ has at least $rd$ roots out of a total of $nd$ roots of $g(x)$.
\end{itemize}
Now we want to show no more roots besides what is in assertion 
(\textbf{CL}) are in $K(\alpha+\beta)$. With that in mind 
we layout the fields in a diagram  with the
 compositum of $K_f$ and $K(\beta)$ at the top.

By the hypothesis of linear disjointness  of
$K(\beta)$ and $K_f$  over $K$, we get, using 
Lemma \ref{jacobson},  (see the diagram)
\begin{itemize}
	\item[(\textbf{LD1})]
	$K(\alpha)$ and $K(\beta)$ are linearly disjoint over $K$ and
\item[(\textbf{LD2})]
	$K_f$ and $K(\alpha+\beta)$ are also linearly disjoint
	over $K(\alpha)$.
	\end{itemize}
\label{lin_dis}
\begin{center}
\begin{tikzpicture}[node distance = 2cm, auto]
 \node (F) {$K$};
	\node (K1) [above of=F, left of=F, node distance=1cm] {$K(\alpha)$};
 \node (E1) [above of=K1,left of =K1, node distance=1cm] {$K_f$};
 \node (E2) [above of=F, right of=F,node distance=1cm] {$K(\beta)$};
	\node (K1E2) [above of=F, node distance = 2cm] {$K(\alpha+\beta)$};
\node (E1E2) [above of=K1E2, left of =K1E2,node distance = 1cm]{$K_fK(\beta)$};

    \draw (F)--(K1)  ;
    \draw (F)--(E2)  ;
    \draw (K1)--(K1E2);
    \draw (K1E2)--(E1E2);
    \draw (E2)--(K1E2);
    \draw (K1)--(E1);
    \draw (E1)--(E1E2);
\end{tikzpicture}
\end{center}

Suppose now, to the contrary,
the cluster of $\alpha+\beta$ has  more roots besides
the one shown in (\textbf{CL}).
Without loss of generality,  we may assume
 $\alpha_{r+1}+\beta_j \in K(\alpha+\beta)$ for some $j$.
As $ \beta_j\in K(\beta)\subset K(\alpha+\beta)$,
 we get  $\alpha_{r+1}\in K(\alpha+\beta)$. Being the
splitting field for $f(x)$, we know that  $K_f$ contains $\alpha_{r+1}$.
So we can see that  $\alpha_{r+1} \in K(\alpha+\beta)\cap K_f$.
From the linear disjointness  statement (\textbf{LD2}) we can also see that  
this intersection 
is $K(\alpha)$, and so  
$\alpha_{r+1}\in K(\alpha)$.

This is a contradiction to the fact that the root cluster of $\alpha$ is
$\{\alpha_1,\alpha_2,\ldots,\alpha_r\}$.
Thus the cluster size of $g(x)$ is $rd$.

\end{proof}

\subsection{Proof of Theorem \ref{application}}
In this subsection we take the base field to be any
algebraic number field.
First let us establish the following easy case  
which is perhaps known to experts in some other guise.
\begin{prop} \label{singleton}
For all integers $n\ge3$ there exist irreducible polynomials
$f(x)$ of degree $n$ in $K[x]$   for which all the root clusters are singletons.
\end{prop}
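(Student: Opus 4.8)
The plan is to translate the condition ``all root clusters are singletons'' into a group-theoretic statement via Perlis's criterion, and then exhibit a Galois group for which the criterion holds automatically. By Theorem~\ref{perthm}(iv), the cluster size of an irreducible $f$ equals the index $[N_G(H):H]$, where $G=\gal(K_f/K)$ acts on the roots and $H$ is the stabilizer of one root. Hence a singleton cluster is exactly the condition that the point stabilizer $H$ be self-normalising in $G$. So it suffices to produce, over the number field $K$, an irreducible polynomial of degree $n$ whose Galois group, in its natural action on the $n$ roots, has self-normalising point stabilizers.

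The natural candidate is the full symmetric group. First I would take $G=S_n$ acting naturally on $n$ symbols, so that the stabilizer of a point is $H\cong S_{n-1}$. The group theory is then a short conjugation argument: for $\tau\in S_n$ one has $\tau H\tau^{-1}=\mathrm{Stab}(\tau(n))$, and for $n\ge3$ distinct points have distinct stabilizers (the subgroup $S_{n-1}$ fixes only the point it stabilizes), so $\tau H\tau^{-1}=H$ forces $\tau(n)=n$, i.e. $\tau\in H$. Thus $N_{S_n}(H)=H$ and the cluster size is $1$. Equivalently, by Theorem~\ref{perthm}(iii), the $n$ conjugate fields $K(\alpha_i)$ are pairwise distinct, so $s_K(f)=n$ and $r_K(f)=1$. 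Note this argument genuinely needs $n\ge3$: for $n=2$ the stabilizer is trivial and self-normalising fails, consistent with every quadratic being Galois.

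It then remains to realise $S_n$ as the Galois group of a degree-$n$ polynomial over an arbitrary number field $K$. This is the only external input, and it is standard: the generic degree-$n$ polynomial has Galois group $S_n$ over the rational function field $K(t_1,\dots,t_n)$, and Hilbert's irreducibility theorem---valid over every number field---yields infinitely many specializations in $K$ that retain Galois group $S_n$. Alternatively, for $n\ge4$ one may run the same argument with $A_n$: by Proposition~\ref{maximal} the point stabilizer is the maximal subgroup $A_{n-1}$, which is non-normal and hence self-normalising, so Hilbert's $A_n$-realization would also do; but the case $n=3$ must then be handled separately, since an $A_3$-cubic is cyclic with cluster size $3$, which makes the uniform $S_n$ route cleaner.

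I do not expect a serious obstacle here---the paper rightly calls this the ``easy case.'' The only points requiring care are that the realization of $S_n$ must hold over a general number field rather than merely over $\bq$, which is exactly what Hilbert irreducibility provides, and that one must not overlook the $n=3$ endpoint, where the symmetric-group construction still works but the alternating-group shortcut does not.
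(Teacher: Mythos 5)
Your proof is correct, but it takes a genuinely different route from the paper's. The paper invokes Hilbert's realization of $A_n$ over $K$ (with Laguerre polynomials cited for $K=\bq$), takes the subgroup $H$ of index $n$ in $A_n$ fixing $K(\alpha)$, and concludes $N_G(H)=H$ from Proposition~\ref{maximal} together with the simplicity of $A_n$. You instead realize $S_n$ over $K$ by specializing the generic polynomial via Hilbert irreducibility, and verify by a direct conjugation argument that the point stabilizer $S_{n-1}$ is self-normalizing for $n\ge 3$: since $\tau H\tau^{-1}$ is the stabilizer of $\tau(n)$ and distinct points have distinct stabilizers once $n\ge3$, normalizing elements must fix $n$. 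What your route buys is uniformity and robustness: the argument works verbatim for every $n\ge3$ with no appeal to maximality or simplicity. Indeed, your closing remark about the $n=3$ endpoint exposes a real defect in the paper's own proof: an $A_3$-cubic is cyclic, so $K(\alpha)=K_f$ is Galois and the cluster size is $3$, not $1$; moreover Proposition~\ref{maximal} is stated only for $n\ge4$, and the appeal to simplicity is also inaccurate at $n=4$, where $A_4$ is not simple (the conclusion survives there only because the index-$4$ subgroup $C_3$ is maximal and non-normal, e.g.\ by a Sylow count). So the paper's argument, as written, does not cover $n=3$, whereas yours does. The one ingredient you should state explicitly in a final write-up is that number fields are Hilbertian, so the $S_n$-specialization exists over every $K$; this is external input of exactly the same kind (Hilbert) as the paper's, so neither approach is more elementary in its prerequisites.
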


\begin{proof}
	By the well-known result of Hilbert,\cite{serre}
	for any given number
	field $K$ there exists an  irreducible 
polynomial $f(x)$ of degree $n$ over $K$ such that its splitting field $K_f$
	has  Galois group  $A_n$. (For the important
	special case of $K=\bq$, 
	the  Laguerre polynomials have this property.
	See \cite{altpoly}).

Let $\alpha$ be one of the roots  of  such a polynomial $f(x)$.
Then  $K(\alpha)$ is the fixed field of some subgroup
$H \subset A_n$ of index $n$.  

 By Proposition \ref{maximal}, $H$ is a proper maximal subgroup 
 of the simple group $A_n$, and  hence
	$N_G(H)=H$, the other possibility $N_G(H)=G$ is ruled out
	owing to the simplicty of the groups 
	$G=A_n$. Then by part (iv) of Theorem  \ref{perthm} we can conclude
 that the clusters are singletons.
\end{proof}

Now we have all  what is needed in place to  prove  Theorem \ref{application}.
This will be achieved by  appealing  to our theorem on Cluster Magnification.
For that we need  an input polynomial to start with.

We first  prove part (a) of our theorem.
The polynomial  $f(x)$  provided by the just-proved 
Proposition~\ref{singleton} fits 
our purpose. This has cluster size $r=1$. 
Let $K_f$ be its splitting field.
Now using Lemma~\ref{cyclotomic} we can find a Galois
extension $K(\beta)$ of degree $d$ linearly disjoint with $K_f$.
So Cluster Magification (Theorem~\ref{algorithm})
is applicable and we can conclude
that the minimal polynomial of $\alpha+\beta$ where
$\alpha$ is any root of $f(x)$ indeed has  cluster size $d$.

Proof of part (b): if we have  a degree 4 irreducible polynomial
with cluster size 2 we can then apply Magnification getting polynomials of degree $2d$ with cluster size $d$.
 We claim that quartic polynomials with Galois groups $D_4$ will always
have cluster size 2. The degree 4 extension given by such a quartic, 
as a subfield of the splitting field of degree 8,
has to be the fixed field of a subgroup $H$  generated by a reflection. 
Only the rotation of order 2 in $D_4$, being a central element, will be in the normalizer
$N(H)$ (besides  the elements of $H$) 
and so the index will be 2. And this index is the cluster size
as per part (iv) of  Theorem~\ref{perthm}.

For the case of
$\bq$, and $D_4$, Example 2 given in the Introduction 
provides polynomials with $D_4$ as Galois groups completing the
proof of (b).

\noindent\textbf{Remark}:\quad  
The arguments presented just now show that 
for any irreducible polynomial of degree $n$
whose Galois group is the dihedral group $D_n$, 
the cluster size is either 1 or 2 according as $n$ is odd or even.

\vspace{3pt}
\noindent\textbf{Caveat}: Our procedure does not yield irreducible polynomials
of degree $2n$  having root clusters of size $n$ when $n$ is odd.
\subsection{Proof of Theorem \ref{fixkroots}}
As in the hypothesis, $f(x)\in K[x]$ is an irreducible polynomial of degree $n$
with $S_n$ as Galois group. Let the roots be $\alpha_1,\alpha_2,\ldots, \alpha_n$. For $k<n$, define $L_k= K(\alpha_1,\alpha_2,\ldots,\alpha_k)$.
Let $H_k\subset S_n$ be the subgroup fixing $L_k$ elementwise. Clearly $H_k$ consists
of all permutations of the remaining $n-k$ roots and so isomorphic to $S_{n-k}$.
To compute the cluster size we use Theorem \ref{perthm} (iv). 
This involves computing the normalizer of $H_k$ in $S_n$ and the index
of $H_k$ in it.
For this we state the following lemma in  general form which is easy to prove
and hence omitted.
\begin{lemma}
	Let $\Omega$ be a finite set and $A$ a proper  nonempty subset.
	Let $S_\Omega$ be the group of all permutations of $\Omega$,
	with $S_A$  embedded naturally in $S_\Omega$.
	Then the normalizer of $S_A$ in $S_\Omega$ is $S_A\times S_{A^c}$, the
subgroup of permutations of $\Omega$ that preserve the subset $A$ (and hence its
	complement $A^c$).
	The index of $S_A$ in this normalizer is the order of $S_{A^c}$.
\end{lemma}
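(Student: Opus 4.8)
The plan is to prove the two inclusions $S_A\times S_{A^c}\subseteq N_{S_\Omega}(S_A)$ and $N_{S_\Omega}(S_A)\subseteq S_A\times S_{A^c}$ separately, reading $S_A\times S_{A^c}$ throughout as the setwise stabiliser of $A$, i.e.\ the permutations $g$ with $g(A)=A$ (equivalently $g(A^c)=A^c$). Recall that $S_A$ is embedded in $S_\Omega$ as the permutations fixing $A^c$ pointwise.

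For the easy inclusion I would take $g$ with $g(A)=A$ and any $\sigma\in S_A$, and check directly that $g\sigma g^{-1}$ fixes $A^c$ pointwise: for $y\in A^c$ we have $g^{-1}(y)\in A^c$, which $\sigma$ fixes, so $g\sigma g^{-1}(y)=y$. A permutation fixing $A^c$ pointwise automatically permutes $A$, hence $g\sigma g^{-1}\in S_A$; thus $gS_Ag^{-1}\subseteq S_A$, and since conjugation preserves order this forces equality, so $g\in N_{S_\Omega}(S_A)$.

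The substantive inclusion is the reverse one, and the key observation is that $A$ can be recovered intrinsically from $S_A$ as soon as $|A|\ge2$ (the degenerate case $|A|=1$, where $S_A$ is trivial and the asserted equality fails outright, must be excluded). Given $g\in N_{S_\Omega}(S_A)$ and any transposition $(a\,a')$ with $a,a'\in A$, conjugation gives $g(a\,a')g^{-1}=(g(a)\,g(a'))\in S_A$; but a transposition lies in $S_A$ only when both of its moved points lie in $A$, forcing $g(a),g(a')\in A$. Letting $a$ range over $A$ with $a'$ fixed (using $|A|\ge2$) yields $g(A)\subseteq A$, and finiteness of $A$ with injectivity of $g$ promotes this to $g(A)=A$. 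Hence $g$ preserves $A$, i.e.\ $g\in S_A\times S_{A^c}$. One could equally argue via fixed-point sets, observing that $\mathrm{Fix}(S_A)=A^c$ and $\mathrm{Fix}(gS_Ag^{-1})=g(\mathrm{Fix}(S_A))$, so that normalising $S_A$ forces $g(A^c)=A^c$; the transposition version is the more self-contained of the two.

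Finally I would identify the setwise stabiliser of $A$ with $S_A\times S_{A^c}$ in the standard way, each such $g$ restricting to an independent permutation of $A$ and of $A^c$, and then read off the index. Since $S_A$ sits inside the normaliser as the factor $S_A\times\{1\}$, we obtain $[\,N_{S_\Omega}(S_A):S_A\,]=|S_{A^c}|=|A^c|!$, as claimed. The only real obstacle is the reverse inclusion, and the transposition (or fixed-point) characterisation of $A$ disposes of it cleanly; everything else is bookkeeping.
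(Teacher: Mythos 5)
Your proof is correct, but there is no proof in the paper to compare it against: the authors state the lemma ``in general form which is easy to prove and hence omitted,'' so your argument supplies exactly what they left out. Both halves of your argument are sound: the conjugation computation showing that the setwise stabiliser of $A$ normalises $S_A$, and, for the reverse inclusion, the transposition trick $g(a\,a')g^{-1}=(g(a)\,g(a'))\in S_A$ forcing $g(A)=A$ (or, equivalently, the fixed-point-set argument $\mathrm{Fix}(gS_Ag^{-1})=g(\mathrm{Fix}(S_A))=g(A^c)$, valid once $|A|\ge 2$). The index computation at the end is immediate, as you say.

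More importantly, your exclusion of the case $|A|=1$ is not pedantry but a genuine correction to the paper. If $|A|=1$ then $S_A$ is trivial, its normaliser is all of $S_\Omega$, while $S_A\times S_{A^c}\cong S_{A^c}$ is a proper subgroup; so the lemma as printed, whose hypotheses allow any proper nonempty $A$, is false in that case. This boundary case actually bites in the application: Theorem~\ref{fixkroots} allows $k=n-1$, where the subgroup $H_{n-1}$ fixing $L_{n-1}$ pointwise is trivial (here $|A|=n-k=1$), and the claimed cluster size $k!=(n-1)!$ is wrong. Indeed $L_{n-1}$ is the whole splitting field, since the last root is determined by the sum of the roots lying in $K$; hence $L_{n-1}/K$ is Galois and the cluster size is $n!$, the full degree. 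So both the lemma and Theorem~\ref{fixkroots} should carry the hypothesis $|A|\ge 2$, i.e.\ $k\le n-2$, exactly as your proof requires.
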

Now proof of our theorem follows immediately from the above lemma. 
\section{Reformulation}
The interpretation of root cluster size given 
in  part (iv) of Theorem \ref{perthm} makes it possible 
to carry out the whole discussion without mentioning irreducible
polynomials or their roots. We can go back to the general case of  perfect 
field  $K$ instead of 
$\bq$. We indicate below how to do this.

All the fields considered will be subfields of $\bar K$ and finite 
extensions of $K$.

Let us take such a field $L$ of degree $n$ over $K$ 
and let $\widetilde L$ be its  Galois closure with 
$G=\gal(\widetilde L/K)$. Let
$H=\{g\in G\mid g(a) = a\mbox{ for }a\in L\}$.

The index of $H$ in $N_G(H)$, denote it by $r$,  is now  the cluster
size of $L/K$. (See Corollary \ref{cseq})

We can reformulate Cluster Magnifying Theorem as below:

\textit{Given an extension $L/K$ of degree n with cluster size $r|n$, 
and any finite Galois extension F/K of degree $d$ which is
linearly disjoint  with $\widetilde{L}$ over K,  the compositum  $ LF$ 
having degree $nd$ over  $K$ will have cluster size $rd$ over $K$.}

To see the validity  in this formulation first  let us denote $\gal(F/K)$ 
by $\Gamma$. The linear disjointness ensures the Galois group of compositum
is the direct product.
So our Cluster Magnifying Theorem becomes index magnifying theorem for subgroups
under direct product.
That is a consequence of the naturality
statement below $$N_{G\times \Gamma}(H\times \{e\}) = N_G(H)\times \Gamma$$
proof of which  is a simple exercise in group theory.
\section{Examples}
Applying our cluster magnification process, starting with some easy
 polynomials with cluster size readily  computable as input, we computed 
higher degree polynomials with proportionately higher  cluster size.
The SAGE code  and the output for 3 cases are given below:
\begin{verbatim}
R.<x> = PolynomialRing(QQ)
f = x^5-3x-3 # cluster size is 1
L.<alpha,beta> = NumberField([f, x^4+x^3+x^2+x+1])
gamma = alpha+beta
g = gamma.absolute_minpoly()
print (g)
print ("Magnified Cluster size =", len(g.roots(ring=L)))
\end{verbatim}

\def\vghost{\vrule height14pt width0pt}
\[
\begin{array}{|l|l|l|l|l|}
	\hline
	\mbox{ Input: min poly of } \alpha   &\vghost \mbox{ Magnifier: min poly of
	}\beta & 
	  \mbox{ Magnified Output: min poly of } (\alpha+\beta)  \\ \hline 
x^4-2x^3+x-1  & x^3-3x-1 &\vghost  x^{12} - 6 x^{11} + 51 x^{9} - 51 x^{8} - 120 x^{7} 
	  \\\mbox{degree: } n=4 &\mbox{magnifying factor:}&{}+117 x^{6} + 144 x^{5} - 114 x^{4}- 33 x^{3} \\
	{\mbox{cluster size: } r =2} & d = 3 & 
	  {}+ 63 x^{2} - 72 x + 19 \\
	&&	\vghost \mbox{cluster size: } rd=6 \\ \hline
\hphantom{xx}
	x^3-2 &\vghost  x^3-3x-1 & x^{9} - 9 x^{7} - 9 x^{6} + 27 x^{5} - 54 x^{3} \\
	\mbox{degree: }n = 3&\mbox{magnifying factor:}& {}- 81 x^{2} + 81 x + 27\\
	{\mbox{cluster size: } r =1} & d=3 & 
	{\mbox{cluster size: } rd =3}
\\ \hline
x^5-3x-3  & x^4+ x^3+x^2 + x+1 &\vghost 
x^{20} + 5 x^{19} + 15 x^{18} + 35 x^{17} + 58 x^{16} \\
	\mbox{degree: } n = 5	&\mbox{magnifying factor:}&{}+61 x^{15} + 50 x^{14} + 165 x^{13}
	+ 944 x^{12} 
	\\
	\mbox{cluster size: }r =1 & d=4 & 
    {}+	2921 x^{11} + 5061 x^{10} + 5255 x^{9}  \\ &&{}+ 2882 x^{8} + 1092 x^{7} +3074 x^{6} \\
	&&
	{}+ 4681 x^{5} + 3481 x^{4} + 143 x^{3} \\ &&{} + 54 x^{2} + 602 x + 781\\
	&&\vghost \mbox{cluster size: } rd=4 \\ \hline
\end{array}
\]

\section{Further Problems}
There are many avenues open for further research on the concept
of root clusters.
A few  natural questions that arise in this context 
are listed  below:
\begin{enumerate}
	\item 
 Given an irreducible  polynomial of cluster size
${}>1$, what are the conditions that guarantee  it is obtained 
	by this magnification process from a 
lower degree polynomial with smaller  cluster size?
\item Produce examples of irreducible polynomials of higher cluster
	sizes that are not obtained by magnification process. (`primitive
		cases').
\item Let $\beta_1,\beta_2,\ldots,\beta_s$ be a complete set of 
representatives of the root clusters of an irreducible polynomial.
Now consider the following tower of fields terminating at the splitting field:
	\[ K\subset K(\beta_1)\subset K(\beta_1,\beta_2)\subset
	\cdots \subset K(\beta_1,\beta_2,\ldots,\beta_s)\]
What are the degree of these fields? Is this degree sequence independent of 
the ordering of the $\beta_i$'s? If the Galois group of $f(x)$ is $S_n$ the 
answer is easy, the degrees are: $n, n(n-1), n(n-1)(n-2),\ldots, n!$.
\item Relationship between the number of clusters  and the length of the above tower
seems to be mysterious. One may be tempted to believe that tower length
will be approxitely the number of distinct clusters.
Unfortunately it is not so.
For example there are irreducible polynomials of degree a prime number $p$
		with the semidirect product $C_p\rtimes C_{p-1}$ (a solvable group) 
as Galois group. Cluster size here  is not $p$ and so has to be 1.
 However a 
theorem of Galois on prime degree polynomials says the splitting field is 
generated by two roots when the Galois group is solvable. 
(See \S68 in \cite{edwards}). This shows that  
the tower length is always 3 though the number of clusters is $p$. 
\end{enumerate}

\end{document}